\newcommand{\N}{\ensuremath{\mathbb{N}}}
\newcommand{\NN}{\ensuremath{\mathbb{N}_0}}
\renewcommand{\S}{\ensuremath{\mathbb{S}}}
\newcommand{\Z}{\ensuremath{\mathbb{Z}}}
\newcommand{\R}{\ensuremath{\mathbb{R}}}
\newcommand{\C}{\ensuremath{\mathbb{C}}}
\newcommand{\e}{\mathrm{e}}
\renewcommand{\i}{\mathrm{i}}
\renewcommand{\d}{\, \mathrm{d} }
\newcommand{\zb}[1]{\ensuremath{\boldsymbol{#1}}}
\newcommand{\norm}[1]{\left\lVert #1%\smash{#1} 
  \right\rVert}
\newcommand{\abs}[1]{\left\vert#1\right\vert}
\newcommand{\spr}[1]{\left\langle\,#1\,\right\rangle}
\newcommand{\kl}[1]{\left(#1\right)}
\newcommand{\Kl}[1]{\left\{#1\right\}}
\newcommand{\yd}{y^{\delta}}
\newcommand{\fkt}{\tilde{f}_k}
\newcommand{\ekt}{\tilde{e}_k}
\newcommand{\ak}{\alpha_k}
\newcommand{\ADD}{A^\ddagger}
\spnewtheorem{assumption}{Assumption}{\bfseries}{\itshape}
\begin{document}

% Paper title
\title{A Frame Decomposition of the Funk-Radon Transform 
}

% Running paper title
\titlerunning{A Frame Decomposition of the Funk-Radon Transform}

\author{
Michael Quellmalz \inst{1} \orcidlink{0000-0001-6206-5705} \and
Lukas Weissinger \inst{2} \orcidlink{0000-0003-1032-8774} \and
Simon Hubmer \inst{2} \orcidlink{0000-0002-8494-5188} \and
Paul D.\ Erchinger\inst{1}}

\authorrunning{M.~Quellmalz, L.~Weissinger, S.~Hubmer, P.~Erchinger}

\institute{Technische Universit\"at Berlin, Institute of Mathematics, MA 4-3,
Stra{\ss}e des 17.\ Juni 136, D-10623 Berlin, Germany\\
\email{quellmalz@math.tu-berlin.de}, \email{erchinger@campus.tu-berlin.de}
\\
\url{www.tu.berlin/imageanalysis} 
\and
Johann Radon Institute Linz, Altenbergerstraße 69, A-4040 Linz, Austria\\
\email{lukas.weissinger@ricam.oeaw.ac.at, simon.hubmer@ricam.oeaw.ac.at}
\\
\url{www.ricam.oeaw.ac.at}}

\maketitle  

% % % % % % %
% Abstract  %
% % % % % % %
\begin{abstract}
The Funk-Radon transform assigns to a function defined on the unit sphere its integrals along all great circles of the sphere. 
In this paper, we consider a frame decomposition of the Funk-Radon transform, which is a flexible alternative to the singular value decomposition. 
In particular, we construct a novel frame decomposition based on trigonometric polynomials and show its application for the inversion of the Funk-Radon transform. Our theoretical findings are verified by numerical experiments, which also incorporate a regularization scheme.

\keywords{Funk-Radon Transform \and Frame Decompositions \and Inverse and Ill-Posed Problems \and Numerical Analysis \and Tomography.}
\end{abstract}

% % % % % % % % % % % % % %
% Section - Introduction  %
% % % % % % % % % % % % % %
\section{Introduction}

The \emph{Funk-Radon transform} assigns to a function $f\colon \S^2\to\C$ defined on the two-dimensional \emph{unit sphere} $\S^2 \coloneqq \{\boldsymbol\xi\in\R^3: \norm{\zb\xi}=1 \}$ its integrals along all great circles of the sphere, i.e.,
\begin{equation} \label{eq:fr}
  Rf(\zb\xi)
  \coloneqq
  \frac{1}{2\pi}
  \int_{\zb\xi^\top\zb\eta=0} f(\zb\eta) \d\zb\eta \,,
  \qquad \forall\, \zb\xi\in\S^{2} \,,
\end{equation}
where $\mathrm d\zb\eta$ denotes the arclength on the great circle perpendicular to $\zb\xi$.
Tracing back to works of Funk \cite{Fun13} and Minkowski \cite{Min05} in the early twentieth century,
it is also known as Funk transform, Minkowski-Funk transform or spherical Radon transform.
It has found applications in diffusion MRI \cite{RaTiWhWe22,Tuc04}, radar imaging \cite{YaYa11}, Compton camera imaging \cite{Ter23}, photoacoustic tomography \cite{HrMoSt16}, and geometric tomography \cite[Chap.\ 4]{Gar06}.
Besides analytic inversion formulas, e.g., \cite{BaEaGoMa03,Fun13,Hel11,Kaz18},
the numerical reconstruction of functions given its Funk-Radon transform can be done using mollifier methods \cite{LoRiSpSp11,RiSp13}, the eigenvalue decomposition \cite{HiQu15}, or discretization on the cubed sphere \cite{Bel22}.
Generalizations have been developed for various non-central sections of the sphere \cite{AgRu19,Qu17,Rub22,Sal17} or for derivatives \cite{Kaz18,QuHiLo18}. 

In this paper, we are interested in \emph{frame decompositions} (FDs) of the Funk-Radon transform. Originally developed in the framework of wavelet-vaguelette decompositions \cite{Abramovich_Silverman_1998,Dicken_Maass_1996,Donoho_1995,Frikel_2013,Frikel_Haltmeier_2018,Kudryavtsev_Shestakov_2019,Lee_1997} and then extended to biorthogonal curvelet and shearlet decompositions \cite{Candes_Donoho_2002,Colonna_Easley_Guo_Labate_2010}, FDs are generalizations of the singular value decomposition (SVD) \cite{Ebner_Frikel_Lorenz_Schwab_Haltmeier_2023,Frikel_Haltmeier_2020,Hubmer_Ramlau_2020,Hubmer_Ramlau_2021_01,Hubmer_Ramlau_Weissinger_2022,Weissinger_2021}. In particular, they allow SVD-like decompositions of bounded linear operators also in those cases when the SVD itself is either unknown, its computation is infeasible, or its structure is unfavourable. More precisely, given a bounded linear operator $A\colon X \to Y$ between real or complex Hilbert spaces $X$ and $Y$, an FD of $A$ is a decomposition of the form
    \begin{equation}\label{frame_dec_A}
		A x = \sum_{k=1}^\infty \sigma_k \spr{x,e_k}_X \fkt  \,,
		\qquad \forall \, x \in X \,.
	\end{equation}
Here, the sets $\Kl{e_k}_{k\in\N}$ and $\Kl{f_k}_{k\in\N}$ form frames over $X$ and $Y$, respectively, and $\{\fkt\}_{k\in\N}$ denotes the dual frame of the frame $\Kl{f_k}_{k\in\N}$; see Section~\ref{sect_background} below. The main requirement on $e_k$ and $f_k$ is that they satisfy the quasi-singular relation
    \begin{equation}\label{cond_frames_connected}
        \overline{\sigma_k} \, e_k =  \, A^*f_k \,,
        \qquad
        \forall \,k \in \N \,,
    \end{equation}
where $\overline{\sigma_k}$ denotes the complex conjugate of the coefficient $\sigma_k \in \C$ and $A^*$ the adjoint of $A$. Using the FD \eqref{frame_dec_A} it is possible to compute (approximate) solutions of the linear equation $Ax = y$, and to develop filter-based regularization schemes as for the SVD \cite{Ebner_Frikel_Lorenz_Schwab_Haltmeier_2023,Hubmer_Ramlau_2021_01,Hubmer_Ramlau_Weissinger_2022}. However, the question remains whether frames satisfying \eqref{cond_frames_connected} can be found. While this is possible by geometric considerations for some examples \cite{Donoho_1995,Frikel_2013,Frikel_Haltmeier_2018,Hubmer_Ramlau_2020}, an explicit construction ``recipe'' exists in case that $A$ satisfies the stability condition
    \begin{equation}\label{cond_A_stability}
	    c_1 \norm{x}_X \leq \norm{Ax}_Z \leq c_2 \norm{x}_X \,,
	    \qquad
	    \forall \, x \in X \,,
	\end{equation}	
for some constants $c_1,c_2 > 0$ and a Hilbert space $Z \subseteq Y$. In this case, one can start with an arbitrary frame $\Kl{f_k}_{k\in\N}$ over $Y$ with the additional property 
    \begin{equation}\label{cond_norm_Z}
    	a_1 \norm{y}_Z^2 
    	\leq \sum_{k=1}^\infty \ak^2  \abs{\spr{y,f_k}_Y}^2 
    	\leq a_2 \norm{y}_Z^2
    	\,,
    	\qquad
    	\forall \, y \in Y \,,
	\end{equation}
for coefficients $0 \neq \ak \in \R$ and some constants $a_1,a_2 > 0$. Then, one defines
    \begin{equation*}
        e_k \coloneqq \ak A^* f_k \,, 
        \qquad
    \end{equation*}
which results in a frame $\Kl{e_k}_{k\in\N}$ over $X$ which satisfies \eqref{cond_frames_connected} with $\overline{\sigma_k} = 1/\ak$ \cite{Hubmer_Ramlau_2021_01}. In case that $Z$ and $Y$ are Sobolev spaces, frames $\Kl{f_k}_{k\in\N}$ satisfying \eqref{cond_norm_Z} can often be found (e.g., orthonormal wavelets \cite{Daubechies_1992}), which has resulted in FDs of the classic Radon transform \cite{Hubmer_Ramlau_2021_01,Hubmer_Ramlau_Weissinger_2022}. On the other hand, while the Funk-Radon transform satisfies a stability property of the form \eqref{cond_A_stability}, see Theorem~\ref{thm:sobolev} below, frames which satisfy \eqref{cond_norm_Z} are more difficult to find. The standard candidate would be spherical harmonics, which, however, already are the eigenfunctions of the Funk-Radon transform, and thus offer no further insight.

Hence, in this paper we consider a different approach for constructing FDs, which was originally outlined in \cite{Hubmer_Ramlau_2021_01}. This approach is still based on the stability property \eqref{cond_A_stability}, but instead of \eqref{cond_norm_Z} it only requires that
    \begin{equation}\label{cond_Y_Z}
		\norm{y}_Y \leq \norm{y}_Z  \,, 
		\qquad \forall \, y \in Z \,,
	\end{equation} 
that $Z \subseteq Y$ is a dense subspace of $Y$, and that the frame functions $f_k$ are elements of $Z$, i.e.,  $\norm{f_k}_Z < \infty$. In this case, one can build an alternative FD of $A$ similar to \eqref{frame_dec_A}, which can then be used to compute the (unique) solution of the linear operator equation $Ax = y$ for any $y$ in the range $\mathcal R(A)$ of $A$, and to develop stable reconstruction approaches in case of noisy data $\yd$.
 
The aim of this paper is to show that the above approach is applicable to the Funk-Radon transform. In particular, we construct an FD using trigonometric functions, which have the advantage of their fast computation outperforming spherical harmonics, cf.\ \cite{MilQue22,Yee80}. For this, we first review some background on frames and FDs in Section~\ref{sect_background}. Then, in Section~\ref{sect_Funk_Radon} we show that all required properties are satisfied for the Funk-Radon transform with a suitable choice of the functions $f_k$. Furthermore, we provide explicit expressions for the frame functions $e_k$, leading to an FD and a corresponding reconstruction formula. Finally, in Section~\ref{sect_numerics} we consider the efficient implementation of our derived FD and evaluate its reconstruction quality on a number of numerical examples.

% % % % % % % % % % % % % % % % % %
% Section - Frame Decompositions  %
% % % % % % % % % % % % % % % % % %
\section{Background on Frames and Frame Decompositions}\label{sect_background}

In this section, we review some background on frames and FDs, collected from the seminal works \cite{Christensen_2016,Daubechies_1992} and the recent article \cite{Hubmer_Ramlau_2021_01}, respectively.

\begin{definition}\label{def_frame}
A sequence $\{e_k\}_{k \in \N}$ in a Hilbert space $X$ is called a \emph{frame} over $X$, if and only if there exist \emph{frame bounds} $0< B_1,B_2 \in \R$ such that there holds
	\begin{equation}\label{eq_framedef}
    	B_1 \norm{x}_X^2 \leq \sum\limits_{k=1}^\infty \abs{\spr{x,e_k}_X}^2 \leq B_2 \norm{x}_X^2 \,,
        \qquad
        \forall \, x \in X \,.
	\end{equation}
\end{definition}

For a given frame $\{e_k\}_{k\in \N}$, one can consider the \emph{frame (analysis) operator} $F$ and its adjoint \emph{(synthesis)} operator $F^*$, which are given by
    \begin{equation}\label{def_F_Fad}
    \begin{split}
        &F \, \colon \, X \to \ell_2(\N) \,, \qquad
        x \mapsto \kl{\spr{x,e_k}_X}_{k \in \N} \,,
        \\ 
        &F^* \, \colon \, \ell_2(\N) \to X \,, \qquad
        \kl{a_k}_{k\in\N} \mapsto \sum\limits_{k=1}^\infty a_k e_k  \,.
    \end{split}	
    \end{equation}
Due to \eqref{eq_framedef} there holds $\sqrt{B_1} \leq \norm{F} = \norm{F^*} \leq \sqrt{B_2}$. Furthermore, one can define
    \begin{equation}\label{Def_S}
        S x \coloneqq F^*F x = \sum\limits_{k=1}^\infty \spr{x,e_k}_X e_k \,,
        \qquad
        \text{and}
        \qquad
        \ekt \coloneqq S^{-1} e_k \,.
    \end{equation}
Since $S$ is continuously invertible with $B_1 I \leq S \leq B_2 I$, the functions $\ekt$ are well-defined, and the set $\{\tilde{e}_k\}_{k\in \N}$ forms a frame over $X$ with frame bounds $B_2^{-1},B_1^{-1}$ called the \emph{dual frame} of $\{e_k\}_{k \in \N}$. Furthermore, it can be shown that 
    \begin{equation}\label{eq_frame_rec}
        x = \sum\limits_{k=1}^\infty \spr{x, \tilde{e}_k}_X e_k
        = \sum\limits_{k=1}^\infty \spr{x, e_k}_X \tilde{e}_k \,,
        \qquad
        \forall \, x \in X \,.
    \end{equation}
In general, this decomposition is not unique, which is a key difference between frames and bases, but it can be understood as the ``most economical'' one \cite{Daubechies_1992}.

Next, we consider FDs. For this, we use the following

\begin{assumption}\label{ass_main_III}
The operator $A \colon X \to Y$ {between the Hilbert spaces $X,Y$} is bounded, linear, and satisfies condition \eqref{cond_A_stability} for some constants $c_1,c_2 > 0$, where the Hilbert space $Z \subseteq Y$ is a dense subspace of $Y$ satisfying \eqref{cond_Y_Z}. Furthermore, the set $\Kl{f_k}_{k\in\N}$ forms a frame over $Y$ with frame bounds $C_1,C_2>0$, and $\{\fkt\}_{k\in\N}$ denotes the dual frame of $\Kl{f_k}_{k\in\N}$. Moreover, the functions $f_k$ are elements of $Z$, i.e., $\norm{f_k}_Z < \infty$, and $E\colon Z \to Y$, $z\mapsto z$, denotes the \emph{embedding operator}.
\end{assumption}

Now, the key idea for constructing an FD of $A$ is the suitable choice of a frame $\Kl{e_k}_{k\in \N}$ over $X$ based on the frame $\Kl{f_k}_{k\in \N}$, as outlined in

\begin{proposition}[{\cite[Lem.\ 4.5]{Hubmer_Ramlau_2021_01}}]\label{lem_ek_L_frame}
Let $A \colon X \to Y$ and let Assumption~\ref{ass_main_III} hold. Then the set $\{e_k\}_{k\in \N}$, where the functions $e_k$ are defined as
	\begin{equation}\label{def_ek_L}
		e_k \coloneqq A^*L f_k \,,
        \qquad
        \text{where}
        \qquad
        L \coloneqq (E E^*)^{-1/2} \,,
	\end{equation}
form a frame over $X$ with frame bounds $B_1= c_1^2\, C_1$ and $B_2 = c_2^2 \,C_2$, where $C_1$ and $C_2$ are the frame bounds of $\{f_k\}_{k\in\N}$, and $c_1$ and $c_2$ are as in Assumption~\ref{ass_main_III}.
\end{proposition}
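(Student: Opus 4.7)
The plan is to verify the two frame inequalities of Definition~\ref{def_frame} for $\Kl{e_k}_{k\in\N}$ by reducing the sum $\sum_k \abs{\spr{x,e_k}_X}^2$ to a quantity that can be pinched between multiples of $\norm{x}_X^2$ using the stability condition \eqref{cond_A_stability}. First I would rewrite
\begin{equation}
\spr{x,e_k}_X = \spr{x,A^*Lf_k}_X = \spr{Ax, Lf_k}_Y = \spr{LAx, f_k}_Y,
\end{equation}
where the last equality uses self-adjointness of $L = (EE^*)^{-1/2}$ on $Y$. To justify this chain one needs $Lf_k \in Y$ (so that the middle inner product is well-defined) and $Ax$ in the domain of $L$. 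The former comes from $\norm{f_k}_Z < \infty$ combined with the isometry statement below; the latter follows from \eqref{cond_A_stability}, which forces $Ax \in Z$.

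Granting the reformulation, applying the frame inequalities of $\Kl{f_k}_{k\in\N}$ to $LAx \in Y$ immediately gives
\begin{equation}
C_1 \norm{LAx}_Y^2 \leq \sum_{k=1}^\infty \abs{\spr{x,e_k}_X}^2 \leq C_2 \norm{LAx}_Y^2.
\end{equation}
The heart of the argument is then the isometry identity $\norm{Ly}_Y = \norm{y}_Z$ for every $y \in Z$, and this is the step I expect to be the main obstacle, since the operator $(EE^*)^{-1/2}$ is generally unbounded and domain identifications must be done carefully. The cleanest route is via the polar decomposition $E = U(E^*E)^{1/2}$ of the embedding: the injectivity of $E$ together with the density of $Z$ in $Y$, both supplied by Assumption~\ref{ass_main_III}, make $U$ a unitary operator between $Z$ and $Y$. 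Functional calculus then yields $E^*(EE^*)^{-1}E = I_Z$ on a suitable core, and a short computation gives
\begin{equation}
\norm{Ly}_Y^2 = \spr{(EE^*)^{-1}y, y}_Y = \spr{E^*(EE^*)^{-1}Ez, z}_Z = \norm{z}_Z^2 = \norm{y}_Z^2,
\end{equation}
where $y \in Z$ is identified with $Ez$ for the corresponding $z \in Z$.

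Specializing the isometry to $y = Ax$ produces $\norm{LAx}_Y = \norm{Ax}_Z$. Combining this with the stability condition \eqref{cond_A_stability} then finishes the proof, yielding
\begin{equation}
c_1^2 C_1 \norm{x}_X^2 \leq \sum_{k=1}^\infty \abs{\spr{x,e_k}_X}^2 \leq c_2^2 C_2 \norm{x}_X^2 \qquad \forall\, x\in X,
\end{equation}
so that $\Kl{e_k}_{k\in\N}$ is a frame over $X$ with the claimed bounds $B_1 = c_1^2 C_1$ and $B_2 = c_2^2 C_2$.
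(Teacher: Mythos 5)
Your proposal is correct and follows essentially the same route as the proof of this result in the cited source (this paper itself only quotes \cite[Lem.\ 4.5]{Hubmer_Ramlau_2021_01} without reproducing the argument): rewrite $\spr{x,e_k}_X=\spr{LAx,f_k}_Y$, apply the frame bounds of $\Kl{f_k}_{k\in\N}$ to $LAx$, and combine the isometry $\norm{LEz}_Y=\norm{z}_Z$ coming from the polar decomposition of the embedding with the stability condition \eqref{cond_A_stability}. The only point to polish is the intermediate identity $\norm{Ly}_Y^2=\spr{(EE^*)^{-1}y,y}_Y$, which requires $y\in\mathcal{R}(EE^*)$ rather than just $y\in\mathcal{R}(E)=\mathcal{R}((EE^*)^{1/2})$, so, as you yourself indicate, it should be asserted on a core and then extended to all of $Z$ (e.g.\ using that $L$ is closed and $LE$ agrees there with the unitary factor); this is a technicality, not a gap.
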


The choice \eqref{def_ek_L} for the functions $e_k$ leads us to the following

\begin{proposition}[{\cite[Lem.\ 4.6]{Hubmer_Ramlau_2021_01}}]\label{lem_dec_LA}
Let $A \colon X \to Y$, let Assumption~\ref{ass_main_III} hold, and let the functions $e_k$ be defined as in \eqref{def_ek_L}. Then for all $x \in X$ there holds
	\begin{equation}\label{def_A_LA}
        L A x = \sum_{k=1}^\infty \spr{x,e_k}_X \fkt \,,
        \qquad
        \text{and}
        \qquad
        A x = L^{-1} \kl{\sum_{k=1}^\infty \spr{x,e_k}_X  \fkt} \,,
	\end{equation}
where the second equality uses the fact that $L^{-1} = (EE^*)^{1/2}$ is continuous.
\end{proposition}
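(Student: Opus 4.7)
The plan is to apply the frame reconstruction formula \eqref{eq_frame_rec} for the frame $\Kl{f_k}_{k\in\N}$ in $Y$ to the element $LAx$, and then use adjoint transposition together with the definition \eqref{def_ek_L} of $e_k$ to identify the inner-product coefficients with $\spr{x,e_k}_X$.

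First, I would record the relevant well-definedness. Since $EE^*$ is bounded, self-adjoint and non-negative on $Y$, so is $L^{-1}=(EE^*)^{1/2}$, and the functional calculus then gives a self-adjoint (in general unbounded) operator $L=(EE^*)^{-1/2}$. The stability condition \eqref{cond_A_stability} forces $\mathcal{R}(A)\subseteq Z$, and the setup inherited from Proposition~\ref{lem_ek_L_frame} guarantees that such elements lie in the domain of $L$, so $LAx\in Y$ is a meaningful element to which \eqref{eq_frame_rec} applies.

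Applying \eqref{eq_frame_rec} to $y=LAx$ in $Y$ gives
\[
LAx \;=\; \sum_{k=1}^\infty \spr{LAx,f_k}_Y \,\fkt .
\]
Using self-adjointness of $L$ and the adjoint of $A$, each coefficient satisfies
\[
\spr{LAx,f_k}_Y \;=\; \spr{Ax,Lf_k}_Y \;=\; \spr{x,A^*Lf_k}_X \;=\; \spr{x,e_k}_X ,
\]
by the definition \eqref{def_ek_L} of $e_k$. This establishes the first identity in \eqref{def_A_LA}. For the second, I would apply the bounded operator $L^{-1}$ to both sides; the continuity of $L^{-1}=(EE^*)^{1/2}$ is exactly what is needed to justify writing $Ax = L^{-1}\bigl(\sum_{k=1}^\infty \spr{x,e_k}_X \fkt\bigr)$ as an honest identity in $Y$.

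The main hurdle is the functional-analytic handling of $L$ as a possibly unbounded operator — in particular, confirming that $L$ is self-adjoint on its natural domain and that $LAx$ is legitimately defined. Both points are provided by the construction behind Assumption~\ref{ass_main_III} and Proposition~\ref{lem_ek_L_frame}; once they are in place, the argument collapses to a single application of the frame reconstruction formula together with moving $L$ and $A$ across the $Y$-inner product.
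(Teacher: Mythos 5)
Your argument is correct and is essentially the proof given in the cited reference \cite[Lem.\ 4.6]{Hubmer_Ramlau_2021_01} (the paper itself only cites the result): apply the dual-frame reconstruction \eqref{eq_frame_rec} to $LAx$ and transpose $L$ and $A$ across the inner products to identify the coefficients with $\spr{x,e_k}_X$, then apply the bounded operator $L^{-1}=(EE^*)^{1/2}$. The only point you leave implicit is why $Z\subseteq\mathcal{D}(L)$, which rests on the range identity $\mathcal{R}((EE^*)^{1/2})=\mathcal{R}(E)=Z$; this is exactly the fact established in the framework behind Proposition~\ref{lem_ek_L_frame}, so attributing it there is legitimate.
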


Next, we consider the solution of the linear operator equation $Ax=y$ using the FD of $A$ from above. Summarizing results from Lemma~4.7, Theorem~4.8, and Remark~4.2 in \cite{Hubmer_Ramlau_2021_01}, which are essentially consequences of \eqref{def_A_LA}, we obtain

\begin{theorem}\label{thm_main_IIIII}
Let $A\colon X \to Y$ be a bounded linear operator, let Assumption~\ref{ass_main_III} hold, and let the functions $e_k$ be defined as in \eqref{def_ek_L}. Then for any $y \in \mathcal R(A) \subseteq Z$,
    \begin{equation}\label{def_ADt}
		\ADD y \coloneqq \sum_{k=1}^\infty \spr{Ly ,f_k}_Y \ekt \,,
	\end{equation}
is the well-defined, unique solution of $Ax=y$, and	$\norm{\ADD y}_X \leq \sqrt{C_2/B_1}  \norm{L y}_Y$.
\end{theorem}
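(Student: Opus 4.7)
The plan is to verify, in order, uniqueness of any solution of $Ax=y$, convergence of the series defining $\ADD y$, the equation $A\ADD y = y$, and the claimed norm bound. Uniqueness is immediate from the stability condition \eqref{cond_A_stability}: the lower inequality $\norm{Ax}_Z \geq c_1 \norm{x}_X$ forces $A$ to be injective, so any solution (if it exists) is unique.

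For convergence and the norm estimate, I exploit the frames on both sides. The upper frame bound $C_2$ of $\Kl{f_k}_{k\in\N}$ gives $\sum_k \abs{\spr{Ly, f_k}_Y}^2 \leq C_2 \norm{Ly}_Y^2$, so the coefficient sequence lies in $\ell_2(\N)$. By Proposition~\ref{lem_ek_L_frame}, $\Kl{e_k}_{k\in\N}$ is a frame over $X$ with lower bound $B_1 = c_1^2 C_1$, hence its dual frame $\Kl{\ekt}_{k\in\N}$ has upper frame bound $1/B_1$ and a synthesis operator of norm at most $\sqrt{1/B_1}$. This simultaneously yields convergence of $\ADD y$ in $X$ and the estimate $\norm{\ADD y}_X \leq \sqrt{C_2/B_1}\,\norm{Ly}_Y$.

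It remains to show $A\ADD y = y$. Since $y \in \mathcal R(A)$, write $y = Ax$ for some $x \in X$. By the frame reconstruction formula \eqref{eq_frame_rec} applied to $\Kl{e_k}_{k\in\N}$, one has $x = \sum_k \spr{x, e_k}_X \ekt$, so the claim reduces to verifying that the coefficients match, i.e.\ $\spr{x, e_k}_X = \spr{Ly, f_k}_Y$. Using $e_k = A^* L f_k$ and passing to the adjoint,
\[
\spr{x, e_k}_X = \spr{Ax, L f_k}_Y = \spr{y, L f_k}_Y,
\]
so everything boils down to the self-adjointness identity $\spr{y, L f_k}_Y = \spr{Ly, f_k}_Y$.

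The main obstacle is precisely this last step, because $L = (EE^*)^{-1/2}$ is in general unbounded on $Y$, so both $Lf_k$ and $Ly$ must be shown to lie in its natural domain. For $f_k$ this is ensured by the construction in Proposition~\ref{lem_ek_L_frame} (exploiting $f_k \in Z$), while $Ly = LAx$ is well defined by Proposition~\ref{lem_dec_LA}, which even represents it as $\sum_k \spr{x,e_k}_X \fkt$. Self-adjointness of $L$ on its domain is then inherited from the self-adjointness and positivity of $EE^*$ via the Borel functional calculus. Substituting back yields $\ADD y = x$ and hence $A\ADD y = Ax = y$, finishing the proof.
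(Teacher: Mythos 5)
Your argument is correct and follows essentially the same route as the paper (via the cited results of Hubmer--Ramlau): injectivity from the stability bound \eqref{cond_A_stability}, the coefficient identity $\spr{x,e_k}_X=\spr{Ax,Lf_k}_Y=\spr{LAx,f_k}_Y$ coming from $e_k=A^*Lf_k$ and the self-adjointness of $L$ on its domain $Z$, the dual-frame reconstruction formula \eqref{eq_frame_rec}, and the frame bounds $C_2$ and $B_1^{-1}$ for the norm estimate. You also correctly handle the only delicate point, namely that $Lf_k$ and $Ly=LAx$ lie in the domain of the unbounded operator $L$, so no gap remains.
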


% % % % % % % % % % % % % % % % % % % % % % % % % % % % % % % % 
% Section - Frame Decompositions of the Funk-Radon Transform  %
% % % % % % % % % % % % % % % % % % % % % % % % % % % % % % % %
\section{Frame Decompositions of the Funk-Radon Transform}\label{sect_Funk_Radon}

% Subsection - Eigenvalue Decompositions of the Funk-Radon Transform

The eigenvalue decomposition of the Funk-Radon transform \eqref{eq:fr}, which is also an FD, is due to \cite{Min05}. Denoting by $P_\ell$ the $\ell$-th Legendre polynomial, we have
    \begin{equation} \label{eq:fr-svd}
        RY_\ell^m
        =
        P_\ell(0)\, Y_\ell^m
        =
        \begin{cases}
            \frac{(-1)^{\ell/2} (\ell-1)!!}{\ell!!}\, Y_\ell^m\,, & \ell\text{ even}\,,\\
            0\,, & \ell\text{ odd}\,,
        \end{cases}
    \end{equation}
where the eigenfunctions are the \emph{spherical harmonics} $Y_\ell^m$ \cite{Mue66} of degree $\ell\in\NN$ and order $m=-\ell,\dots,\ell$, which form an orthonormal basis of $L^2(\S^{2})$.
From its definition in \eqref{eq:fr}, we see that $Rf$ is even for any $f\colon \S^2\to\C$, i.e., $Rf(\zb\xi) = Rf(-\zb\xi)$ for all $\zb\xi\in\S^2$. Conversely, $Rf$ vanishes for odd functions $f(\zb\xi) = -f(-\zb\xi)$, so we can expect to recover only even functions $f$ from their Funk-Radon transform.

The spherical \emph{Sobolev space} $H^s(\S^{2})$, $s\in\R$, can be defined as the completion of $C^\infty(\S^{2})$ with respect to the norm  \cite{AtHa12}
    \begin{equation} \label{eq:Hs-norm}
        \norm{f}_{H^s(\S^{2})}
        \coloneqq
        \sum_{\ell=0}^{\infty} \sum_{m=-\ell}^{\ell}
        (\ell+\tfrac{1}{2})^{2s} |\langle f,Y_\ell^m\rangle_{L^2(\S^{2})} |^2 \,,
    \end{equation}
where $\langle f,g\rangle_{L^2(\S^{2})} \coloneqq \int_{\S^2} f(\zb\xi)g(\zb\xi) \d\zb\xi$,
and we denote by $H^s_{\mathrm{even}}(\S^2)$ its restriction to even functions, which is the span of spherical harmonics $Y_\ell^m$ with even degree $\ell\in 2\NN$. The Sobolev spaces are nested, i.e., $H^s(\S^2) \subsetneq H^r(\S^2)$ whenever $s>r$, and we have $H^0(\S^2) = L^2(\S^2)$.

\begin{theorem} \label{thm:sobolev}
Let $s\ge0$. The Funk-Radon transform $R$ defined in \eqref{eq:fr} extends to a continuous and bijective operator from $X=H^s_{\mathrm{even}}(\S^{2})$ to $Z=H^{s+1/2}_{\mathrm{even}}(\S^{2})$ that satisfies \eqref{cond_A_stability} with the bounds $c_1 = {\sqrt{1/2}}$ and $c_2 = {\sqrt{2/\pi}}$.
Furthermore, it also extends to a continuous and self-adjoint operator from $H^s_{\mathrm{even}}(\S^{2})$ to $H^{s}_{\mathrm{even}}(\S^{2})$.
\end{theorem}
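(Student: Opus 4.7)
The plan is to reduce the entire statement to a sharp two-sided bound on the eigenvalues $P_\ell(0)$, appearing in the spectral identity \eqref{eq:fr-svd}, which already diagonalizes $R$ in the spherical-harmonic basis. Since $\{Y_\ell^m\}$ is an orthonormal basis of $L^2(\S^2)$ and the weighted squared coefficients $(\ell+\tfrac12)^{2s} |\langle f,Y_\ell^m\rangle_{L^2(\S^2)}|^2$ define the $H^s$-norm by \eqref{eq:Hs-norm}, every statement reduces (via Parseval) to a multiplier estimate on the eigenvalue sequence.

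First I would use \eqref{eq:fr-svd} to write, for $f\in H^s_{\mathrm{even}}(\S^2)$,
\begin{equation}
\norm{Rf}_{H^{s+1/2}(\S^2)}^2
=
\sum_{\ell\in 2\NN}\sum_{m=-\ell}^{\ell}
\bigl(\ell+\tfrac12\bigr)\, P_\ell(0)^2 \, \bigl(\ell+\tfrac12\bigr)^{2s}\bigl|\langle f,Y_\ell^m\rangle_{L^2(\S^{2})}\bigr|^2 .
\end{equation}
Hence the claimed stability \eqref{cond_A_stability} with constants $c_1=\sqrt{1/2}$ and $c_2=\sqrt{2/\pi}$ is equivalent to
\begin{equation}
\tfrac{1}{2} \;\le\; a_\ell \coloneqq \bigl(\ell+\tfrac12\bigr)\, P_\ell(0)^2 \;\le\; \tfrac{2}{\pi} ,
\qquad
\forall\, \ell\in 2\NN .
\end{equation}
Because $P_0(0)=1$ gives $a_0=1/2$, it suffices to prove that $(a_\ell)_{\ell\in 2\NN}$ is strictly increasing with supremum $2/\pi$.

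Next I would establish this monotonicity by a direct algebraic comparison. Using the double-factorial form $P_\ell(0)=(-1)^{\ell/2}(\ell-1)!!/\ell!!$ from \eqref{eq:fr-svd} for even $\ell$, one gets the recursion $P_{\ell+2}(0) = -\tfrac{\ell+1}{\ell+2} P_\ell(0)$, so that
\begin{equation}
\frac{a_{\ell+2}}{a_\ell}
=
\frac{(2\ell+5)(\ell+1)^2}{(2\ell+1)(\ell+2)^2}
=
\frac{2\ell^3+9\ell^2+12\ell+5}{2\ell^3+9\ell^2+12\ell+4} \;>\; 1.
\end{equation}
The limit $a_\ell\to 2/\pi$ follows from Stirling's formula applied to $P_{2k}(0)=(-1)^{k}\binom{2k}{k}/4^k$, giving $\binom{2k}{k}/4^k\sim 1/\sqrt{\pi k}$ and hence $(2k+\tfrac12)P_{2k}(0)^2\to 2/\pi$. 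Together with the monotonicity this pins down the sharp constants in \eqref{cond_A_stability}.

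Finally I would assemble the remaining claims. \emph{Injectivity} of $R\colon H^s_{\mathrm{even}}(\S^2)\to H^{s+1/2}_{\mathrm{even}}(\S^2)$ is immediate, since $P_\ell(0)\neq 0$ for every even $\ell$. \emph{Surjectivity} follows because the lower bound $c_1>0$ yields a closed range, while $R(Y_\ell^m/P_\ell(0))=Y_\ell^m$ shows that the range contains the dense span of even spherical harmonics in $H^{s+1/2}_{\mathrm{even}}(\S^2)$. For the second assertion, continuity $R\colon H^s_{\mathrm{even}}(\S^2)\to H^s_{\mathrm{even}}(\S^2)$ uses the trivial bound $|P_\ell(0)|\le 1$, and self-adjointness follows spectrally: since each $P_\ell(0)\in\R$, the diagonal expansion gives $\langle Rf,g\rangle_{H^s(\S^2)}=\langle f,Rg\rangle_{H^s(\S^2)}$ for all $f,g\in H^s_{\mathrm{even}}(\S^2)$. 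The main technical obstacle is the sharpness in Step~2—proving both that $a_\ell$ is monotone (so no interior maximum can exceed $2/\pi$) and that its supremum equals $2/\pi$ exactly; everything else is a standard consequence of the eigenvalue decomposition.
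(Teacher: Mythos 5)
Your proof is correct, and it follows the same spectral route as the paper --- reducing everything to the two-sided bound $1/2\le(\ell+\tfrac12)P_\ell(0)^2\le 2/\pi$ for even $\ell$ and obtaining it from monotonicity plus the limit --- but it is genuinely more self-contained. The paper outsources every key step: bijectivity is cited from Strichartz, the reduction of \eqref{cond_A_stability} to the eigenvalue bound \eqref{eq:Pl_bound} is cited from Theorem~3.13 of the thesis \cite{quellmalzdiss}, and the monotonicity of the sequence is only asserted ``analogously to'' a lemma in \cite{HiPoQu18}. You instead prove the monotonicity outright via the Legendre recurrence $P_{\ell+2}(0)=-\tfrac{\ell+1}{\ell+2}P_\ell(0)$ and the ratio $\tfrac{(2\ell+5)(\ell+1)^2}{(2\ell+1)(\ell+2)^2}=\tfrac{2\ell^3+9\ell^2+12\ell+5}{2\ell^3+9\ell^2+12\ell+4}>1$ (which checks out), obtain the limit $2/\pi$ from Stirling applied to $P_{2k}(0)=(-1)^k\binom{2k}{k}4^{-k}$, and replace the citation for surjectivity by the clean argument ``bounded below $\Rightarrow$ closed range'' combined with the dense range $R(Y_\ell^m/P_\ell(0))=Y_\ell^m$ for even $\ell$. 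What each approach buys: the paper's proof is three lines and leans on established results; yours is verifiable in place, and your squared formulation $a_\ell=(\ell+\tfrac12)P_\ell(0)^2$ with endpoints $1/2$ and $2/\pi$ is actually the precise statement of what the paper phrases slightly loosely (the paper says the unsquared sequence is bounded below by $1/2$ and converges to $2/\pi$, whereas those constants are $\sqrt{1/2}$ and $\sqrt{2/\pi}$ for the sequence as written). The only implicit step, common to both arguments, is that the multiplier computation is first performed on the dense span of even spherical harmonics and then extended by continuity; this is standard and unproblematic.
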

\begin{proof}
The bijectivity of $R\colon X\to Y$ is due to \cite[§\,4]{Str81}.
From Theorem 3.13 in \cite{quellmalzdiss}, we know that $c_1$ and $c_2$ in \eqref{cond_A_stability} are characterized by
\begin{equation} \label{eq:Pl_bound}
    c_1 \left(\ell + \tfrac12 \right)^{-\frac12} \leq \frac{(\ell-1)!!}{\ell !!}=\abs{P_\ell(0)} \leq c_2 \left(\ell + \tfrac12 \right)^{-\frac12}
    \,,\qquad \forall \, \ell \in 2\N_0\,.
\end{equation}
Analogously to the proof of Lemma 3.2 in \cite{HiPoQu18}, we can see that the sequence $2\NN\ni \ell\mapsto (\ell+1/2)^{1/2}\, (\ell-1)!!/\ell !!$ is increasing and converges to $2/\pi$ for $\ell\to\infty$.
Therefore, it is bounded from below by its value $1/2$ for $\ell=0$ and from above by its limit $2/\pi$. Furthermore, $R\colon X\to X$ is self-adjoint since its eigenvalues $P_\ell(0),$ cf.\ \eqref{eq:fr-svd}, are real.
\qed
\end{proof}

% Subsection - A Frame Decomposition of the Funk-Radon Transform

In the following, we describe a trigonometric basis on the sphere that allows us to obtain a novel FD of the Funk-Radon transform $R$. We start with the \emph{spherical coordinate transform} 
    \begin{equation} \label{eq:s2-coord}
        \phi(\lambda,\theta)
        \coloneqq
        (\cos\lambda\, \sin\theta,\, \sin\lambda\,\sin\theta,\, \cos\theta)\,,
        \qquad \forall\, \lambda\in[0,2\pi)\,,\ \theta\in[0,\pi]\,,
    \end{equation}
which is one-to-one except for $\theta\in\{0,\pi\}$ corresponding to the north and south pole.
We assume $\lambda$ to be $2\pi$-periodic, and define the trigonometric basis functions
    \begin{equation} \label{eq:basis-s2}
        b_{n,k}\colon \S^2\to\C\,,\ 
        b_{n,k}(\phi(\lambda,\theta))
        \coloneqq
        \frac{\e^{\i n \lambda}\, \sin(k\theta)}{\pi \sqrt{\sin\theta}}
        \,,\qquad \forall\, n\in\Z\,,\ k\in\N\,,
    \end{equation}
which are well-defined and continuous on $\S^2$ since $b_{n,k}$ vanishes for $\theta\to0$ and $\theta\to\pi$.
Trigonometric bases on $\S^2$ bear the advantage of their simple and fast computation \cite{Yee80}, and form the foundation of the double Fourier sphere method \cite{MilQue22,WilTowWri17}. Note that our functions \eqref{eq:basis-s2} slightly differ from the ones in \cite{MilQue22} as we take $\sin(k\theta)$ in order to avoid singularities at the poles.

\begin{lemma} \label{thm:bnk-L2}
  The sequence $\{b_{n,k}\}_{n\in\Z, k\in\N}$ forms an orthonormal basis of $L^2(\S^2)$.
\end{lemma}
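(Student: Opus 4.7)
The plan is to transfer the claim to the unit rectangle $[0,2\pi]\times[0,\pi]$ via the spherical coordinate map $\phi$, and then recognize $b_{n,k}$ up to a constant as the tensor product of the complex Fourier exponential basis in $\lambda$ and the Fourier sine basis in $\theta$. Concretely, because the surface measure on $\S^2$ satisfies $\d\zb\xi = \sin\theta\,\d\lambda\,\d\theta$, the map
\begin{equation}
T\colon L^2(\S^2)\to L^2([0,2\pi]\times[0,\pi])\,,\qquad
(Tf)(\lambda,\theta) \coloneqq f(\phi(\lambda,\theta))\sqrt{\sin\theta}\,,
\end{equation}
is an isometry, and sends $b_{n,k}$ to the function $(\lambda,\theta)\mapsto \e^{\i n\lambda}\sin(k\theta)/\pi$.

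For orthonormality I would just compute $\langle b_{n,k},b_{n',k'}\rangle_{L^2(\S^2)}$ directly: the factor $1/\sin\theta$ from $|b_{n,k}|^2$ cancels against the Jacobian, leaving the product of $\int_0^{2\pi}\e^{\i(n-n')\lambda}\,\d\lambda=2\pi\delta_{n,n'}$ and $\int_0^\pi\sin(k\theta)\sin(k'\theta)\,\d\theta=(\pi/2)\delta_{k,k'}$, which combines with the prefactor $1/\pi^2$ to give $\delta_{n,n'}\delta_{k,k'}$. This is routine and makes no use of the isometry structure.

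For completeness I would argue via $T$. Since $\{\e^{\i n\lambda}/\sqrt{2\pi}\}_{n\in\Z}$ is an orthonormal basis of $L^2([0,2\pi])$ and $\{\sqrt{2/\pi}\sin(k\theta)\}_{k\in\N}$ is an orthonormal basis of $L^2([0,\pi])$, their tensor product is an orthonormal basis of the product space, namely exactly $\{T b_{n,k}\}_{n\in\Z,k\in\N}$. Pulling back via $T^{-1}$ gives the orthonormal basis property on $L^2(\S^2)$. The only subtle point, which I would address briefly, is that $T$ is a genuine unitary isomorphism: the set where $\phi$ fails to be injective (the two poles, and the identification $\lambda=0\sim\lambda=2\pi$) has surface measure zero on $\S^2$, so $T$ is norm-preserving and every $g\in L^2([0,2\pi]\times[0,\pi])$ lifts to a well-defined $L^2$ class on $\S^2$ via $f(\phi(\lambda,\theta)) = g(\lambda,\theta)/\sqrt{\sin\theta}$. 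This measure-theoretic justification of the onto-ness of $T$ is the only mildly non-trivial step; everything else is a direct application of classical one-dimensional Fourier theory.
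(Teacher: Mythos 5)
Your proposal is correct and follows essentially the same route as the paper: orthonormality by the direct computation in spherical coordinates (the Jacobian $\sin\theta$ cancelling the $1/\sin\theta$ from $|b_{n,k}|^2$), and completeness from the completeness of the exponential basis on $[0,2\pi]$ and the sine basis on $[0,\pi]$. Your explicit unitary map $T$ and the remark that the poles and the seam $\lambda=0\sim 2\pi$ have surface measure zero simply spell out the transfer argument the paper leaves implicit.
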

\begin{proof}
Let $n,n'\in\Z$ and $k,k'\in\N$.
Since the integral on $\S^2$ in spherical coordinates \eqref{eq:s2-coord} reads as 
$
\sin(\theta) \d\theta \d\lambda,
$
we have
    \begin{equation}
        \langle b_{n,k}, b_{n',k'} \rangle_{L^2(\S^2)}
        =
        \frac{1}{\pi^2}
        \int_{0}^{\pi} \int_{0}^{2\pi} \e^{\i (n - n') \lambda}\, {\sin(k\theta)}\, {\sin(k'\theta)} \d\lambda \d\theta
        =
        \delta_{n,n'} \delta_{k,k'}\,,
    \end{equation}
which shows the orthonormality. The completeness follows from the completeness of $\{\e^{\i n \cdot}\}_{n\in\Z}$ in $L^2([0,2\pi])$ and of $\{\sin(k \cdot)\}_{k\in\N}$ in $L^2([0,\pi])$.
\qed
\end{proof}

For $\zb\xi = \phi(\lambda,\theta)\in\S^2$, its antipodal point is $-\zb\xi = \phi(\pi+\lambda,\pi-\theta)$. Hence, we obtain the symmetry relation
$
b_{n,k}(-\zb\xi)
=
(-1)^{n+k+1} b_{n,k}(\zb\xi)\,,
$
which implies that the sequence $\{b_{n,k}\}_{n\in\Z, k\in\N, n+k\ \mathrm{odd}}$ is an orthonormal basis of $L^2_{\mathrm{even}}(\S^2)$.

\begin{lemma} \label{thm:bnk-H1}
  The basis functions $b_{n,k}$, $n\in\Z,$ $k\in\N$ with $n+k$ odd are well-defined elements of $H^1_{\mathrm{even}}(\S^2)$.
  In particular, we also have $b_{n,k} \in H^{1/2}_{\mathrm{even}}(\S^2)$.
\end{lemma}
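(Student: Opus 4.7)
The plan is to work directly with the Sobolev characterization via the surface gradient and show that $\nabla_{\S^2} b_{n,k} \in L^2(\S^2)$, so that $b_{n,k} \in H^1(\S^2)$. Recall that on the sphere the $H^1$ seminorm in spherical coordinates \eqref{eq:s2-coord} takes the form
\begin{equation}
  \norm{\nabla_{\S^2} f}_{L^2(\S^2)}^2
  =
  \int_0^{2\pi}\!\int_0^\pi \left( \abs{\partial_\theta f}^2 + \frac{1}{\sin^2\theta}\abs{\partial_\lambda f}^2 \right) \sin\theta \d\theta \d\lambda \,,
\end{equation}
so together with the $L^2$ bound from Lemma~\ref{thm:bnk-L2} it suffices to bound this integral for $f=b_{n,k}$.

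First I would differentiate the explicit expression \eqref{eq:basis-s2}: the $\lambda$-derivative produces the factor $\i n$, and after multiplying by $1/\sin^2\theta$ and the Jacobian $\sin\theta$ one is left with $n^2\sin^2(k\theta)/(\pi^2\sin^2\theta)$, which near $\theta=0$ behaves like $n^2 k^2/\pi^2$ by the standard limit $\sin(k\theta)/\sin\theta\to k$, and similarly at $\theta=\pi$; hence this contribution is integrable. The $\theta$-derivative gives
$\partial_\theta b_{n,k}(\phi(\lambda,\theta)) = \frac{\e^{\i n\lambda}}{\pi}\bigl[k\cos(k\theta)\sin^{-1/2}\theta - \tfrac{1}{2}\sin(k\theta)\cos\theta\,\sin^{-3/2}\theta\bigr]$, so after multiplying $\abs{\partial_\theta b_{n,k}}^2$ by $\sin\theta$ the bracket becomes
$\pi^{-2}\bigl(k\cos(k\theta)-\tfrac{1}{2}\sin(k\theta)\cos\theta/\sin\theta\bigr)^2$.

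The main technical point is the pole analysis: as $\theta\to 0$ the term $\sin(k\theta)\cos\theta/\sin\theta$ tends to $k$, so the bracket has the finite limit $k-k/2=k/2$, and an analogous cancellation occurs at $\theta=\pi$. Consequently the integrand is bounded on $[0,\pi]$ and the seminorm is finite. Combined with $\norm{b_{n,k}}_{L^2(\S^2)}=1$ from Lemma~\ref{thm:bnk-L2}, this yields $\norm{b_{n,k}}_{H^1(\S^2)}<\infty$. Evenness under $\zb\xi\mapsto-\zb\xi$ was already established by the symmetry relation preceding the lemma for $n+k$ odd, so $b_{n,k}\in H^1_{\mathrm{even}}(\S^2)$.

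The second claim $b_{n,k}\in H^{1/2}_{\mathrm{even}}(\S^2)$ is then immediate from the nesting of the spherical Sobolev spaces, $H^1(\S^2)\subsetneq H^{1/2}(\S^2)$, noted just before Theorem~\ref{thm:sobolev}. I expect the pole cancellation in the $\theta$-derivative to be the only delicate step; everything else is routine verification once the singular parts at $\theta\in\{0,\pi\}$ are shown to be integrable.
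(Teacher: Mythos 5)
Your proposal is correct and follows essentially the same route as the paper: differentiate $b_{n,k}$ in spherical coordinates, use the coordinate expression of the surface gradient, verify square-integrability including the Jacobian $\sin\theta$, and invoke the symmetry relation for $n+k$ odd together with the nesting $H^1(\S^2)\subset H^{1/2}(\S^2)$. The only difference is the last integration step: the paper splits the terms crudely and evaluates $\int_0^\pi \sin^2(k\theta)/\sin^2\theta \d\theta$ via the Fejér kernel formula, obtaining an explicit bound on $\norm{b_{n,k}}_{H^1(\S^2)}$, whereas you observe that after multiplying by the Jacobian the integrand extends continuously to $\theta\in\{0,\pi\}$ (using $\sin(k\theta)/\sin\theta\to \pm k$) and is therefore bounded, which yields finiteness without an explicit constant — entirely sufficient for the lemma as stated.
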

\begin{proof}
  By \cite[§\ 5.2]{Que20}, the $H^1(\S^2)$ Sobolev norm of $f\in C^1(\S^2)$ can be written as
  \begin{equation} \label{eq:H1}
    \norm{f}_{H^1(\S^2)}^2
    =
    \sum_{i=1}^3 \norm{[\nabla_{\mathbb{S}^2}f]_i}_{L^2(\S^2)}^2
    +
    \tfrac14 \norm{f}_{L^2(\S^2)}^2\,,
  \end{equation}
  where $[\nabla_{\mathbb{S}^2}f]_i$ denotes the $i$-th coordinate of the surface gradient given in spherical coordinates by
  \begin{equation}
  \nabla_{\mathbb{S}^2}f(\phi(\lambda,\theta))
  =
  \phi(\lambda,\tfrac\pi2+\theta)\, \partial_\theta f(\phi(\lambda,\theta))
  +
  \phi(\tfrac\pi2+\lambda,\tfrac\pi2)\, \tfrac{1}{\sin\theta}\, \partial_\lambda f(\phi(\lambda,\theta))\,.
  \end{equation}
  Let $n\in\Z$ and $k\in\N$.
  For $f=b_{n,k}$, we have 
  \begin{equation}
  \partial_\theta b_{n,k}(\phi(\lambda,\theta))
  =
  \frac{\e^{\i n\lambda}}{\pi} \left( \frac{k \cos(k\theta)}{\sqrt{\sin\theta}} -\frac{\cos(\theta)\, \sin(k\theta)}{2\, \sin^{3/2}\theta } \right)
  \end{equation}
  and
  \begin{equation}
  \partial_\lambda b_{n,k}(\phi(\lambda,\theta))
  =
  \i n\, \frac{\e^{\i n\lambda}}{\pi}\, \frac{\sin(k\theta)}{\sqrt{\sin\theta}}\,.
  \end{equation}
  Employing the facts that sine and cosine functions, in particular the components of $\phi$, are bounded by one and that $\abs{a+b}^2\leq 2(\abs{a}^2+\abs{b}^2)$ for $a,b\in\R$, we obtain
  \begin{align}
  \abs{[\nabla_{\mathbb{S}^2} b_{n,k}(\phi(\lambda,\theta))]_i}^2
  &\le
  2 \abs{ \frac{k \cos(k\theta)}{\pi\sqrt{\sin\theta}} -\frac{\cos(\theta)\, \sin(k\theta)}{2 \pi\, \sin^{3/2}\theta} }^2
  +
  2 \frac{n^2 \sin^2(k\theta)}{\pi^2\sin^3\theta}
  \\
  &\le
  \frac{4 k^2}{\pi^2\sin\theta} +\frac{ \sin^2(k\theta)}{\pi^2\, \sin^3\theta} 
  +
  \frac{2n^2 \sin^2(k\theta)}{\pi^2\,\sin^3\theta}\,.
  \end{align}
  Hence, we have
  \begin{align}
    \norm{[\nabla_{\mathbb{S}^2}b_{n,k}]_i}_{L^2(\S^2)}^2
    &=
    \int_{0}^{\pi} \int_{0}^{2\pi} \abs{[\nabla_{\mathbb{S}^2}b_{n,k}]_i}^2\, \sin(\theta) \d\lambda\d\theta
    \\
    &\le
    \int_{0}^{\pi} \int_{0}^{2\pi} \left(\frac{4 k^2}{\pi^2} + (1+2n^2)\frac{ \sin^2(k\theta)}{\pi^2\sin^2\theta} \right) \d\lambda\d\theta
    \\
    &=
    8 k^2 + (1+2n^2) 2k\,,
  \end{align}
  where the last equality follows from the the integration formula \cite[Ex.\ 1.15]{PlPoStTa18} of the $(k-1)$th Fejér kernel.
  Finally, we conclude from \eqref{eq:H1} and Lemma \ref{thm:bnk-L2} that
  \begin{equation}
    \norm{b_{n,k}}_{H^1(\S^2)}^2
    =
    \sum_{i=1}^3 \norm{[\nabla_{\mathbb{S}^2}b_{n,k}]_i}_{L^2(\S^2)}^2
    +
    \tfrac14 \norm{b_{n,k}}_{L^2(\S^2)}^2
    \le
    6k(4 k + 1+2n^2) + \tfrac14
  \end{equation}
  is finite.
  The claim follows as $H^1(\S^2)$ is continuously embedded in $H^{1/2}(\S^2)$.
  \qed
\end{proof}

\begin{theorem} \label{thm:fr-fd}
  Let $E\colon H^{1/2}_\mathrm{even}(\S^2) \to L^2_{\mathrm{even}}(\S^2)$ denote the embedding operator,
  and set $L\coloneqq (EE^*)^{-1/2}$.
  Then
  \begin{equation}
    e_{n,k} \coloneqq 
    R L b_{n,k}
    \,,\qquad (n,k)\in J \coloneqq \{(n,k)\in\Z\times\N: n+k\text{ odd}\}\,,
  \end{equation}
  is a frame in $L^2_{\mathrm{even}}(\S^2)$,
  and for any $g\in H^{1/2}_{\mathrm{even}}(\S^2)$,
  the unique solution $f\in L^{2}_{\mathrm{even}}(\S^2)$ of the inversion problem of the Funk-Radon transform
  $ Rf = g$
  satisfies
  \begin{equation}\label{FD_inversion}
    f
    =
    R^\ddagger g
    \coloneqq
    \sum_{(n,k)\in J}
    \langle Lg, b_{n,k} \rangle_{L^2(\S^2)}
    \tilde e_{n,k}\,,
  \end{equation}
  where $\tilde e_{n,k}$ is the dual frame of $e_{n,k}$.
  It holds that
  $
	\norm{R^\ddagger g}_{L^2(\S^2)} \leq 2  \norm{L g}_{L^{2}(\S^2)}.
  $
\end{theorem}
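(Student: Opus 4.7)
The statement is essentially a repackaging of Proposition~\ref{lem_ek_L_frame} and Theorem~\ref{thm_main_IIIII} in the concrete setting $A = R$, $X = Y = L^2_{\mathrm{even}}(\S^2)$, $Z = H^{1/2}_{\mathrm{even}}(\S^2)$, and $\{f_k\}_{k\in\N} = \{b_{n,k}\}_{(n,k)\in J}$. My plan is therefore to verify that all hypotheses of Assumption~\ref{ass_main_III} hold in this concrete setting, and then to read off the frame property, the inversion formula, and the stability estimate from those two results, finally tracking the explicit constants to obtain the stated norm bound.

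Theorem~\ref{thm:sobolev} (with $s=0$) supplies everything on the operator side: $R$ is bounded and linear from $X$ into $Z$, satisfies \eqref{cond_A_stability} with $c_1 = \sqrt{1/2}$ and $c_2 = \sqrt{2/\pi}$, and is self-adjoint on $H^s_{\mathrm{even}}(\S^2)$, so $R^* = R$ and consequently $e_{n,k} = R^* L b_{n,k} = R L b_{n,k}$, matching the definition in the statement. The density of $Z$ in $Y$ and the norm inequality \eqref{cond_Y_Z} both follow from the weighted-$\ell^2$ definition \eqref{eq:Hs-norm}, possibly after a harmless rescaling of the $Z$-norm to absorb the $\ell = 0$ mode where $(\ell+\tfrac12) = \tfrac12$. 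On the frame side, Lemma~\ref{thm:bnk-L2} yields an ONB of the full $L^2(\S^2)$, and the antipodal symmetry $b_{n,k}(-\zb\xi) = (-1)^{n+k+1} b_{n,k}(\zb\xi)$ recorded in the text singles out precisely the even basis elements as those with $n+k$ odd; since $L^2_{\mathrm{even}}(\S^2)$ is the orthogonal complement of $L^2_{\mathrm{odd}}(\S^2)$, the restriction of this ONB to $J$ is an ONB of $Y$, giving $C_1 = C_2 = 1$. Finally, Lemma~\ref{thm:bnk-H1} yields $b_{n,k} \in Z$.

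With Assumption~\ref{ass_main_III} in place, Proposition~\ref{lem_ek_L_frame} immediately provides the frame property of $\{e_{n,k}\}_{(n,k)\in J}$ over $L^2_{\mathrm{even}}(\S^2)$ with bounds $B_1 = c_1^2 = 1/2$ and $B_2 = c_2^2 = 2/\pi$. Since $R\colon X \to Z$ is bijective, $\mathcal R(R) = Z = H^{1/2}_{\mathrm{even}}(\S^2)$, so every $g \in H^{1/2}_{\mathrm{even}}(\S^2)$ is admissible, and Theorem~\ref{thm_main_IIIII} delivers both the explicit reconstruction \eqref{FD_inversion} as the unique solution of $Rf = g$ and the estimate $\norm{R^\ddagger g}_{L^2(\S^2)} \leq \sqrt{C_2/B_1}\,\norm{L g}_{L^2(\S^2)} = \sqrt{2}\,\norm{L g}_{L^2(\S^2)} \leq 2\,\norm{L g}_{L^2(\S^2)}$. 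The only mildly non-mechanical point in the whole argument is tidying up the $L^2$-versus-$H^{1/2}$ norm comparison \eqref{cond_Y_Z}; everything else is a direct quotation of the previously established machinery of Section~\ref{sect_background}.
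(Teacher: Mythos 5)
Your proposal follows essentially the same route as the paper's own proof: it verifies Assumption~\ref{ass_main_III} with $X=Y=L^2_{\mathrm{even}}(\S^2)$, $Z=H^{1/2}_{\mathrm{even}}(\S^2)$ via Theorem~\ref{thm:sobolev}, Lemma~\ref{thm:bnk-L2} together with the antipodal symmetry, and Lemma~\ref{thm:bnk-H1}, and then invokes Proposition~\ref{lem_ek_L_frame} and Theorem~\ref{thm_main_IIIII}; your additional bookkeeping (using self-adjointness to write $e_{n,k}=R^*Lb_{n,k}=RLb_{n,k}$, $C_1=C_2=1$, $B_1=1/2$, and $\sqrt{2}\le 2$ for the stated bound) is correct. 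Your side remark that \eqref{cond_Y_Z} fails for the $\ell=0$ mode with the norm convention \eqref{eq:Hs-norm} is a legitimate fine point the paper passes over silently, and your proposed rescaling of the $Z$-norm is indeed a harmless repair.
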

\begin{proof}
  From Theorem~\ref{thm:sobolev}, Lemmas \ref{thm:bnk-L2} and \ref{thm:bnk-H1}, we see that Assumption \ref{ass_main_III} is satisfied with {$X=Y=L_{\text{even}}^2(\S^2)$ and $Z=H_{\text{even}}^{1/2}(\S^2)$}.
  The claim follows by Theorem~\ref{thm_main_IIIII}.
  \qed
\end{proof}

\begin{remark} \label{rem:L}
The definition \eqref{eq:Hs-norm} of the Sobolev spaces yields that
\begin{equation}
  E^* f
  =
  EE^* f
  =
  \sum_{\ell=0}^{\infty} \sum_{m=-\ell}^{\ell} (\ell+\tfrac12)^{-1} \langle f, Y_\ell^m \rangle_{L^2(\S^2)}\, Y_\ell^m
  \,,\qquad \forall f\in L^2(\S^2)\,.
\end{equation}
Hence, the self-adjoint operator $L$ from Theorem \ref{thm:fr-fd} is a multiplication operator with respect to the spherical harmonics, and we have for all $g\in H^{1/2}(\S^2)$ that
\begin{equation} \label{eq:L}
  L g
  =
  (EE^*)^{-1/2} g
  =
  \sum_{\ell=0}^{\infty} \sum_{m=-\ell}^{\ell} (\ell+\tfrac12)^{1/2} \langle g, Y_\ell^m \rangle_{L^2(\S^2)}\, Y_\ell^m
  \,.
\end{equation}
Denoting by $\Delta_{\mathbb{S}^2}$ the Laplace-Beltrami operator on $\S^2$, we obtain by its eigenvalue decomposition \cite[p.\ 121]{AtHa12} that
$
    Lg
    =
    (-\Delta_{\mathbb{S}^2} + 1/4)^{1/4} g
$
for
$
    g\in H^{1/2}(\S^2) .
$
Furthermore, combining \eqref{eq:fr-svd} and \eqref{eq:L}, it follows that
\begin{equation} \label{eq_RLg}
  R L g
  =
  \sum_{\ell=0}^{\infty} \sum_{m=-\ell}^{\ell} P_\ell(0)\, (\ell+\tfrac12)^{1/2} \langle g, Y_\ell^m \rangle_{L^2(\S^2)}\, Y_\ell^m
  \,,
\end{equation}
and since $\abs{P_\ell(0)}$ decays as $(\ell+\frac12)^{-1/2}$ by \eqref{eq:Pl_bound}, we obtain $RLg\in H^{1/2}(\S^2)$.
\end{remark}

% % % % % % % % % % % % % % % %
% Section - Numerical Results %
% % % % % % % % % % % % % % % %
\section{Numerical Results}\label{sect_numerics}

Next, we discuss the use of regularization for our FD of the Funk-Radon transform, outline the main steps of its implementation, and present numerical results. First, note that noisy data $g^\delta \coloneqq Rf+\delta$ does not necessarily belong to the space $H^{1/2}(\S^2)$, and thus $R^\ddagger g^\delta$ is not well-defined. This necessitates regularization, for which we consider stable approximations of $R^\ddagger g^\delta$ defined by 
\begin{equation*}
    f_\alpha^\delta 
    \coloneqq
    R^\ddagger U_\alpha g^\delta \,,
    \qquad
    \text{and}
    \qquad
    L U_\alpha g^\delta
    \coloneqq
    \sum_{\ell=0}^\infty\sum_{m=-\ell}^\ell \sigma_\ell h_\alpha(\sigma_\ell^2) \langle g^\delta,Y_\ell^m\rangle_{L^2(\S^2)}\, Y_\ell^m\,,
  \end{equation*}
where $\sigma_\ell \coloneqq (\ell+\frac{1}{2})^{-1/2}$
and $R^\ddagger$ is given in \eqref{FD_inversion}.
This amounts to a filtering of the coefficients $\sigma_k$ of $L$ via a suitable \emph{filter function} $h_\alpha$ approximating $s\mapsto 1/s$ as in the SVD case \cite{Engl_Hanke_Neubauer_1996}. In our numerical experiments, we investigate the positive influence of the Tikhonov filter functions $h_\alpha(s)=1/(\alpha+s)$ on the reconstruction quality. Note that the choice $h_\alpha(s)=1/\sqrt{s}$ implies $U_\alpha = L^{-1}$, which by Remark~\ref{rem:L} basically amounts to a smoothing operator; cf., e.g., \cite{Klann_Ramlau_2008}.

\begin{figure}
    \centering
    \includegraphics[width=0.32\textwidth,trim={.5cm 1.2cm .5cm 1.2cm},clip]{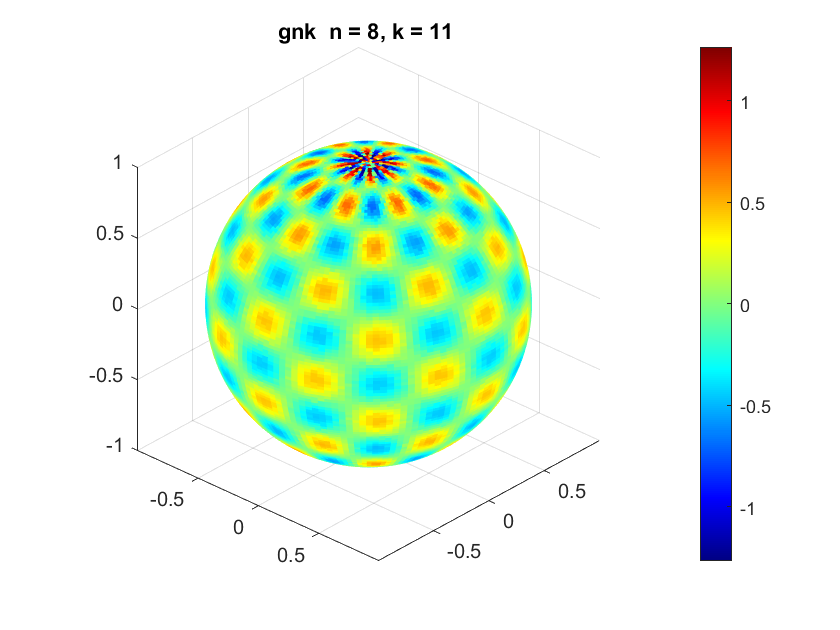}
    \includegraphics[width=0.32\textwidth,trim={.5cm 1.2cm .5cm 1.2cm},clip]{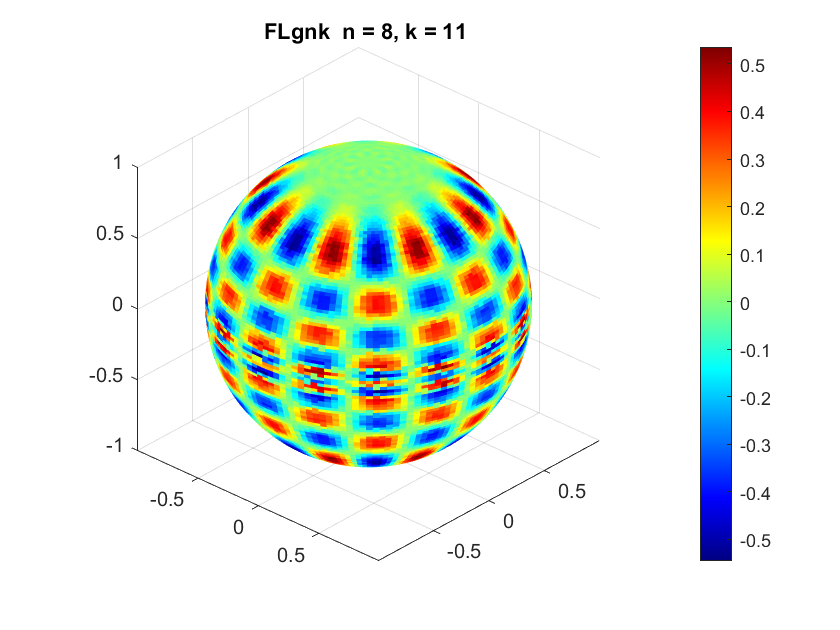}
    \includegraphics[width=0.32\textwidth,trim={.5cm 1.2cm .5cm 1.2cm},clip]{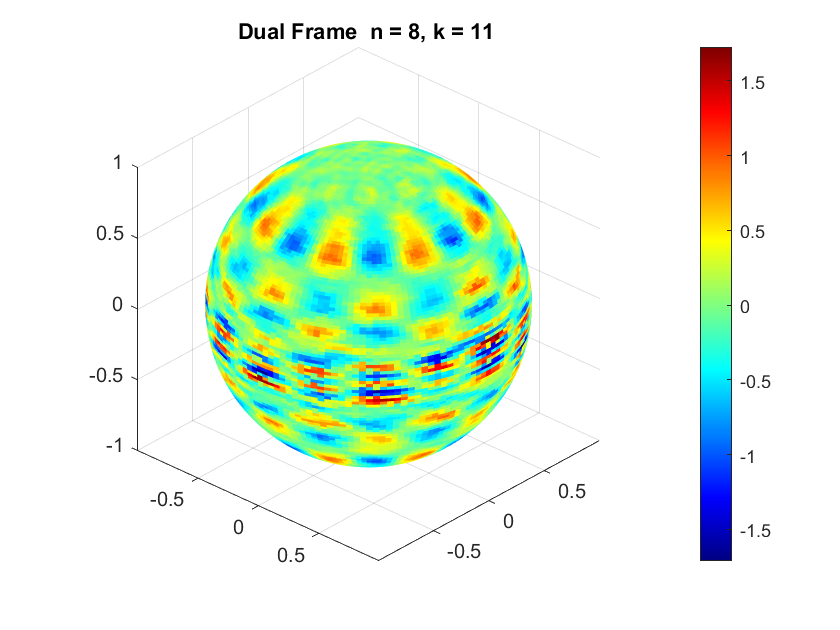}
    \caption{Ingredients of the FD: $b_{nk}$ (left), $e_{nk}$ (middle), $\Tilde{e}_{nk}$ (right) for $n=k=8$.}
    \label{fig:frames}
\end{figure}

For discretization of the problem, we approximate functions on the sphere with a Chebyshev-type quadrature, i.e., quadrature points such that all weights are equal; see \cite{GrPo10}. In our computations, we use quadrature points (spherical design) being exact up to degree 200 taken from \cite{Gr_points}. All computations involving spherical harmonics are done using the NFSFT (Non-equispaced fast spherical Fourier transform) software \cite{KeKuPo09,KuPo03}. Note that the dual frame functions of an FD can be pre-computed and stored, such that the computational effort of computing $R^\ddagger g$ according to \eqref{FD_inversion} for any new measurement $g$ amounts to one application of the operator $L$ (or $L U_\alpha$ in the noisy case), computation of the inner products $\langle Lg,b_{n,k}\rangle_{L_\text{even}^2(\S^2)}$ and a summation over these. As starting point for the computation, we only use a finite set of frame functions $\{b_{n,k}: (n,k)\in J , |n|\leq N , k\leq N\}$ for some $N\in\N.$ The frame functions $e_{n,k}=RLb_{n,k}$ are evaluated at the quadrature nodes via the spherical harmonic decomposition \eqref{eq_RLg} up to degree $\ell\leq 100$. The dual frame functions $\tilde{f}_{n,k}$ are computed using the matrix representation of the linear operator $S$ with respect to the basis $b_{n,k}$, see also \cite[Chap.\ 5]{Hubmer_Ramlau_Weissinger_2022}. Note that the inversion of this matrix may itself be an ill-conditioned problem requiring regularization. Examples of the involved frame functions are depicted in Figure~\ref{fig:frames}. All computations are performed using Matlab R2022b.

% Subsection - Numerical Experiments

\begin{figure}[ht!]
    \centering
    \begin{tabular}{c}
    \subfloat[\centering The test function $f$]{{\includegraphics[width=0.3\textwidth,trim={10 20 10 20},clip]{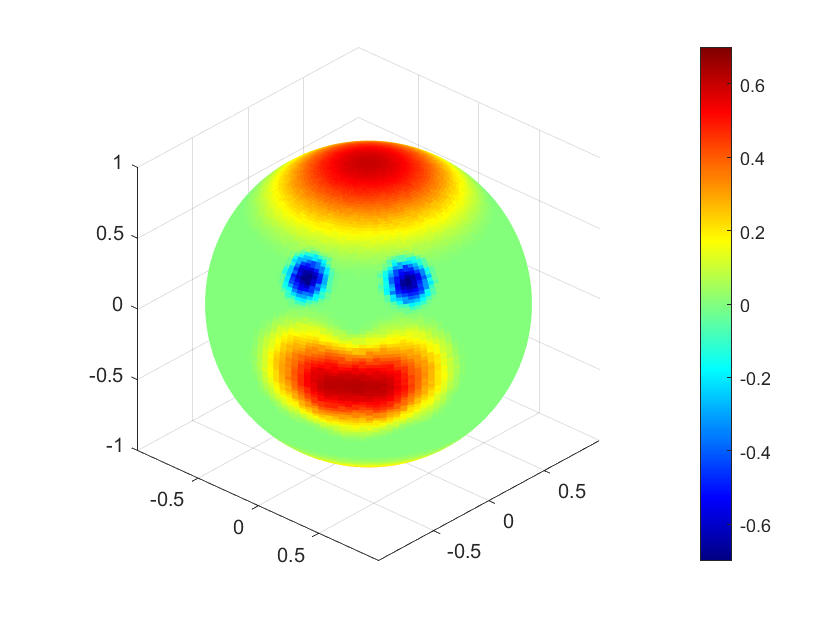} } \label{fig:ground_truth}}%
    \subfloat[\centering $R^\ddagger\,g$ for $N=25$]{{\includegraphics[width=0.3\textwidth,trim={10 20 10 20},clip]{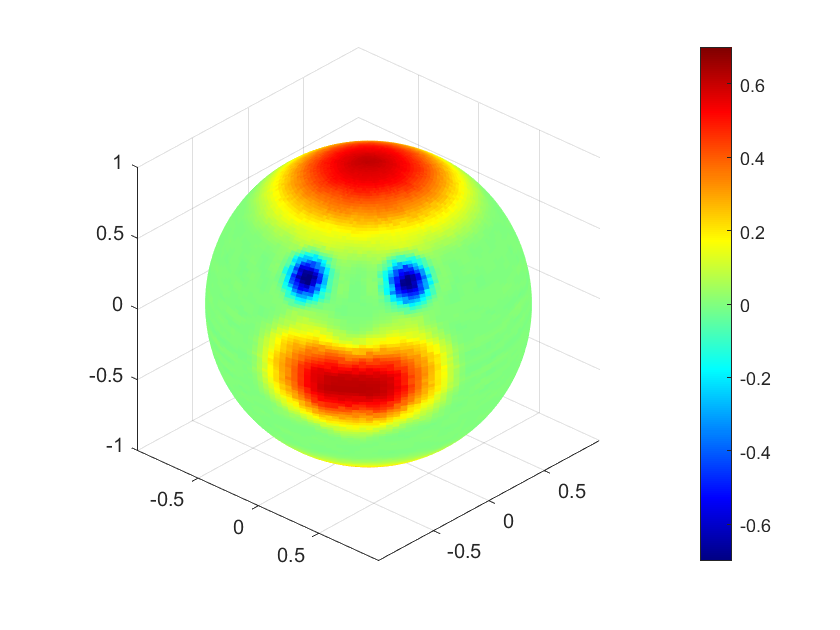} } \label{fig:REC_Tcheby_N25_K25_exact}}%
    \subfloat[\centering $|f-R^\ddagger\,g|$ for $N=25$]{{\includegraphics[width=0.3\textwidth,trim={10 20 10 20},clip]{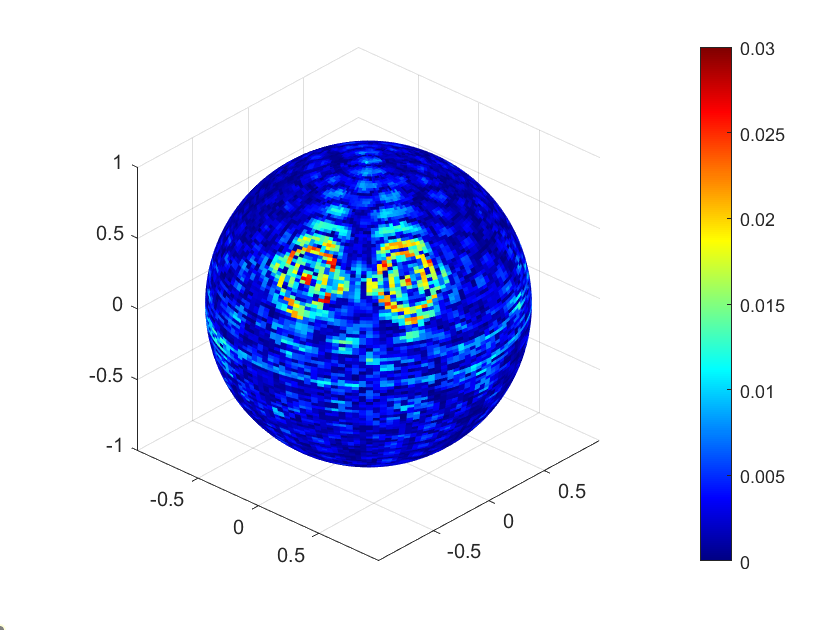} } \label{fig:ERR_Tcheby_N25_K25_exact}}%
    
    \\
    \subfloat[\centering Data $g=R\,f$]{{\includegraphics[width=0.3\textwidth,trim={10 20 10 20},clip]{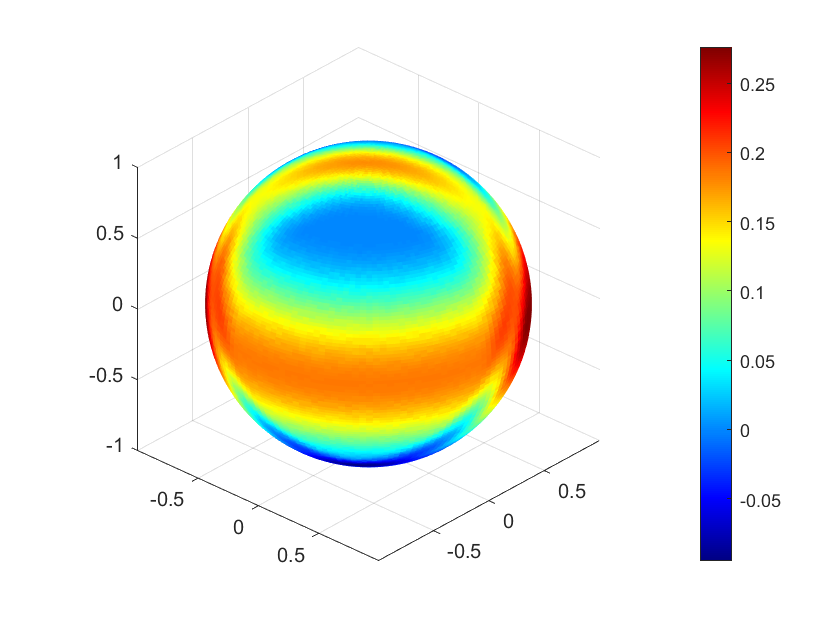} } \label{fig:unperturbeddata}}%
    \subfloat[\centering $R^\ddagger\,g$ for $N=40$]{{\includegraphics[width=0.3\textwidth,trim={10 20 10 20},clip]{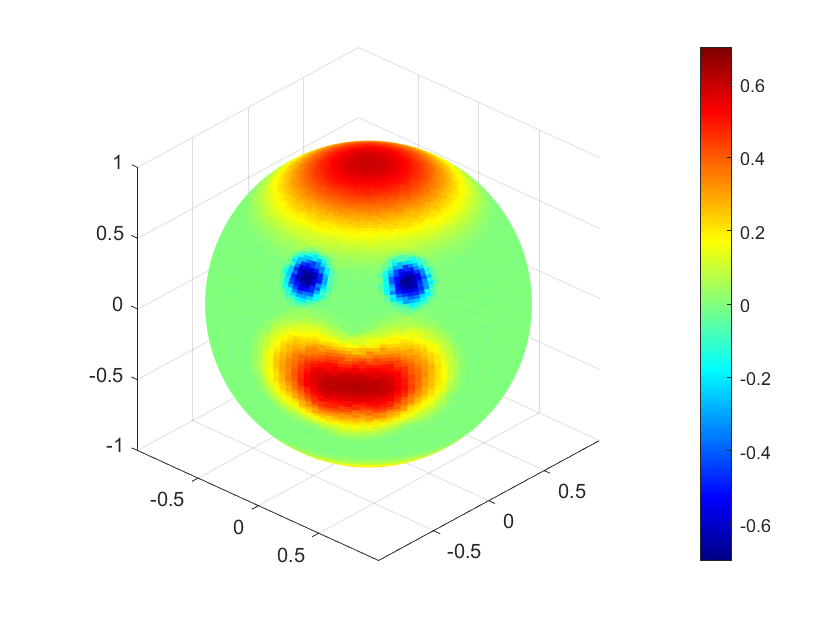} } \label{fig:REC_Tcheby_N40_K40_exact}}%
    \subfloat[\centering $|f-R^\ddagger\,g|$ for $N=40$]{{\includegraphics[width=0.3\textwidth,trim={10 20 10 20},clip]{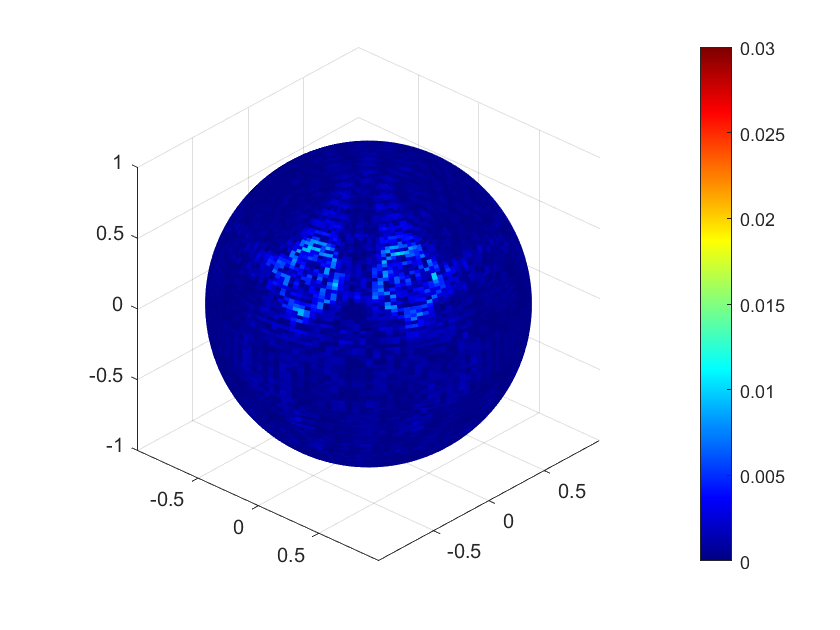} } \label{fig:ERR_Tcheby_N40_K40_exact}}%
    
    \end{tabular}
    \caption{Reconstruction evaluation for the Chebyshev-type quadrature for different numbers of dual-frame functions used with exact data. }
    \label{fig:exact_whole}
\end{figure}

\begin{figure}[ht!]
    \centering
    \begin{tabular}{c}
    \subfloat[\centering The test function $f$]{{\includegraphics[width=0.3\textwidth,trim={10 20 10 20},clip]{Figures/ground_truth.png} } \label{fig:ground_truth2}}%
    \subfloat[\centering $R^\ddagger U_\alpha\,g^\delta$ for $N=25$, $\alpha=0.064$]{{\includegraphics[width=0.3\textwidth,trim={10 20 10 20},clip]{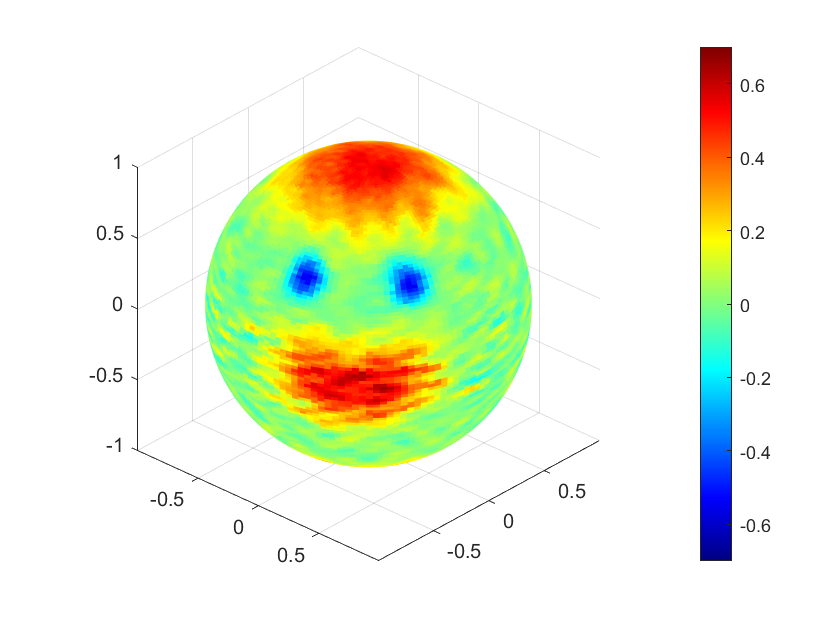} } \label{fig:REC_Tcheby_N25_K25_noise20}}%
    \subfloat[\centering $|f-R^\ddagger U_\alpha\,g^\delta|$ for $N=25$]{{\includegraphics[width=0.3\textwidth,trim={10 20 10 20},clip]{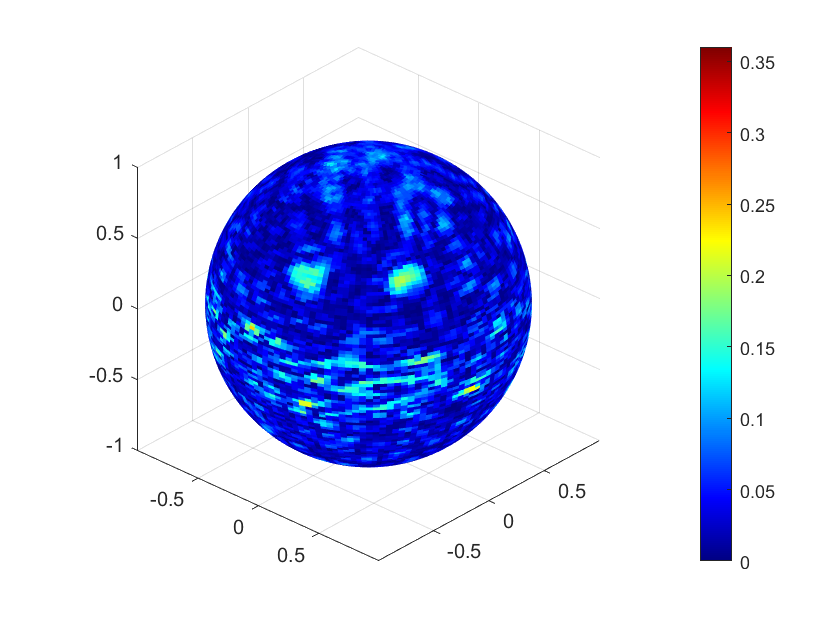} } \label{fig:ERR_Tcheby_N25_K25_noise20}}%
    
    \\
    \subfloat[\centering Noisy data $g^\delta=R\,f+\delta$]{{\includegraphics[width=0.3\textwidth,trim={10 20 10 20},clip]{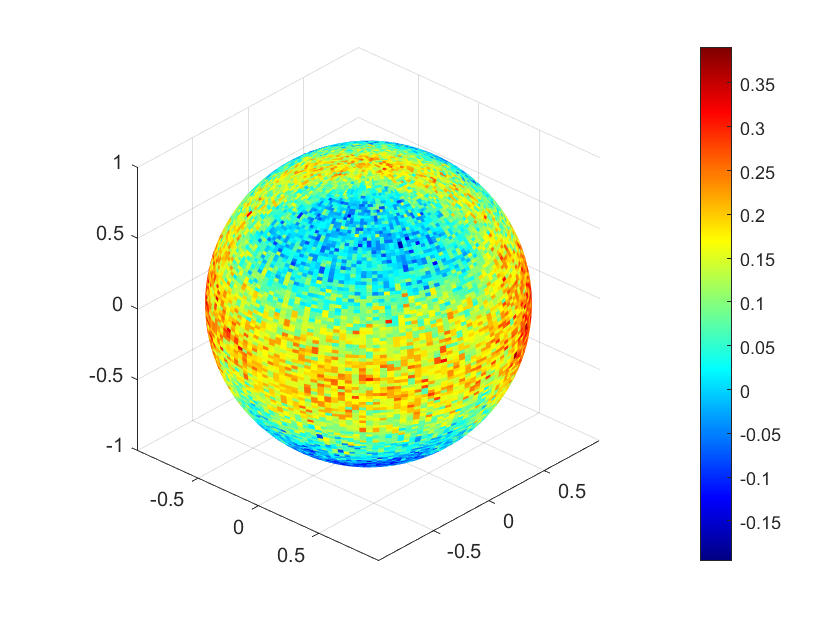} } \label{fig:noisydata20}}%
    \subfloat[\centering $R^\ddagger U_\alpha\,g^\delta$ for $N=40,$ $\alpha=0.14$]{{\includegraphics[width=0.3\textwidth,trim={10 20 10 20},clip]{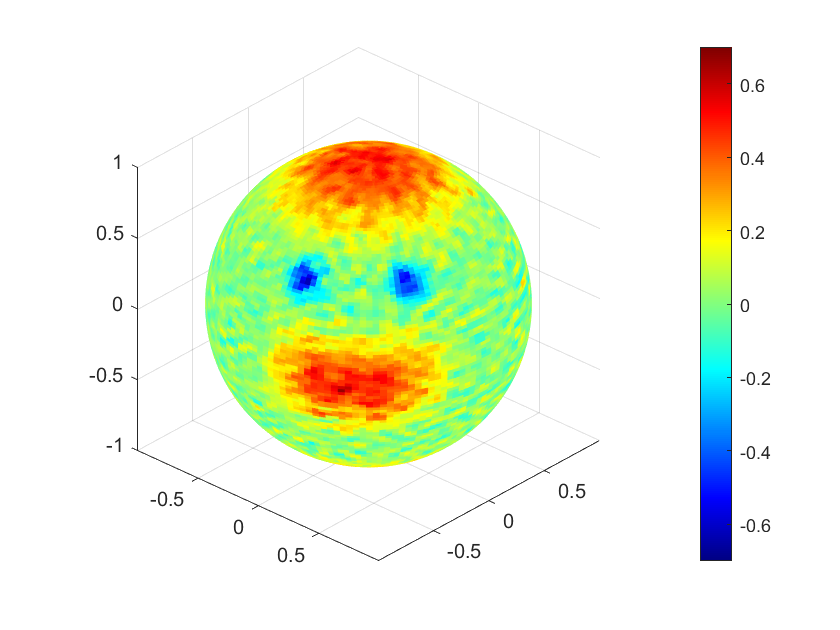} } \label{fig:REC_Tcheby_N40_K40_noise20}}%
    \subfloat[\centering $|f-R^\ddagger U_\alpha\,g^\delta|$ for $N=40$]{{\includegraphics[width=0.3\textwidth,trim={10 20 10 20},clip]{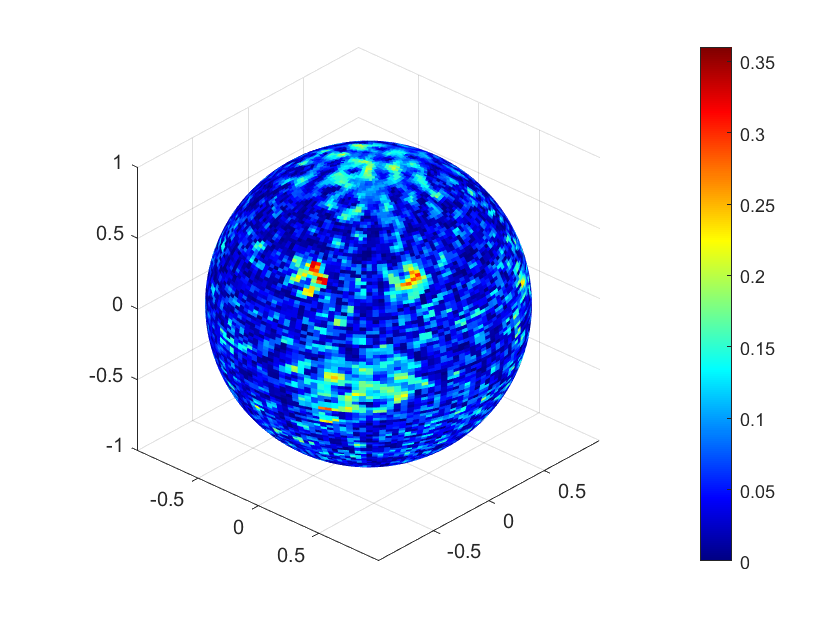} } \label{fig:ERR_Tcheby_N40_K40_noise20}}%
    
    \end{tabular}
    \caption{Reconstruction evaluation for the Chebyshev-type quadrature for different numbers of dual-frame functions used with Gaussian noise $\delta$, noise level $20\% $. }
    \label{fig:noise20_whole}
\end{figure}

The test function used in our numerical experiments is a linear combination of radially symmetric, quadratic splines, whose Funk-Radon transform is computed explicitly \cite[Lem.\ 4.1]{HiQu15}, to prevent inverse crimes. Our quality measure is the relative reconstruction error, i.e., $\|f-R^\ddagger g\|_{L^2(\S^2)}/\|f\|_{L^2(\S^2)}.$ In Figure~\ref{fig:exact_whole}, we see that increasing the number of used frames highly improves reconstruction quality, reducing the relative reconstruction error from $0.023$ for $N=25$ to $0.006$ for $N=40$. For noisy data shown in Figure~\ref{fig:noise20_whole}, the regularization parameter $\alpha$ is chosen such that the relative reconstruction error is minimized. In the case of $N=25$ and noise level $20\%$, the error for the non-regularized solution (i.e., $\alpha=0$) is $0.269,$ while the error for the regularized solution with parameter $\alpha=0.076$ reduces to $0.222.$ However, we see that the increment of the number of frame functions actually results in a loss of reconstruction quality to an error value of $0.332$ (optimally regularized). This can be explained by the regularization effect of the truncation itself: more frame functions result in a less stable reconstruction, but yield a higher accuracy in case of exact data. Note that all specific error values in the noisy case are insignificantly varying for the specific realization of the randomly generated Gaussian noise $\delta$.

% % % % % % % %
% Conclusion  %
% % % % % % % %
\section{Conclusion} 

In this paper, we derived a novel frame decomposition of the Funk-Radon transform utilizing trigonometric basis functions $b_{n,k}$ on the unit sphere and suitable embedding operators in Sobolev spaces. This decomposition does not involve the spherical harmonics and leads to an explicit inversion formula for the Funk-Radon transform. In our numerical examples, we obtained promising reconstruction results even in the case of very large noise by including regularization. While the regularization itself currently uses a spherical harmonics expansion of the operator $L$, in our future work we aim to apply other forms of regularization avoiding the computationally expensive spherical harmonics entirely.

% % % % % % % % % % %
% Acknowledgements  %
% % % % % % % % % % %
\subsubsection{Acknowledgements} 

This work was funded by the Austrian Science Fund (FWF): project F6805-N36 (SH) and the German Research Foundation (DFG): project 495365311 (MQ) within the SFB F68: ``Tomography Across the Scales''. LW is partially supported by the State of Upper Austria.
%The funding of this work is not disclosed in this anonymously written version for review.

% % % % % % % % %
% Bibliography  %
% % % % % % % % %

\bibliographystyle{splncs04}
\bibliography{mybib,ref}

\end{document}